
\documentclass[smallextended,draft,onecolumn]{svjour3}

\usepackage{graphicx}
\usepackage{mathptmx}
\usepackage{latexsym}
\usepackage{amssymb,amsfonts,amsmath}

\journalname{Optim lett}

\setlength{\textwidth}{156mm}
\setlength{\textheight}{226mm}

\begin{document}

\title{Optimal sequencing of a set of positive numbers with the variance of the sequence's partial sums maximized }
\titlerunning{Optimal sequencing of a set of positive numbers}

\author{Li Wei  \and  Wangdong Qi  \and Dingxing Chen   \and Peng Liu  \and En Yuan}
\institute{Li Wei \at
              PLA University of \,Science and Technology\,,
              Nanjing 210007, China\\
              \email{wlnb@hotmail.com}
           \and
           Wangdong Qi , Dingxing Chen , Peng Liu , En Yuan \at
              PLA University of \,Science and Technology\,,
              Nanjing 210007, China
}

\date{Received: date / Accepted: date}

\maketitle

\begin{abstract}
   We consider the problem of sequencing a set of positive numbers. We try to find the optimal sequence to maximize the variance of its partial sums. The optimal sequence is shown to have a beautiful structure. It is interesting to note that the symmetric problem which aims at minimizing the variance of the same partial sums is proved to be NP-complete in the literature.
   \keywords{optimal sequence \and partial sums \and sequencing \and variance}
\end{abstract}

\section{Introduction}
  \label{sec:Introduction}
   This paper considers the problem of sequencing a set of positive numbers to obtain a sequence with the variance of its partial sums maximized (named $\mathrm{VPS_{max}}$ problem for simplicity). Before giving formal formulation of the problem, we introduce some denotations.

   Let \,$\vec{A}=\left\{a_k,1\le k\le n\mid 0<a_1<a_2<\dots<a_n\right\}$\, be a set of  positive numbers sorted in ascending order and $\vec{C}=\pi(\vec{A})=\left\{c_k,1\le k\le n\right\}$\,be a sequencing of the elements \,$a_1,a_2,\dots,a_n $. We define the partial sums of sequence \,$\vec{C}$\, as\, $\vec{S}=\left\{s_k,\,1\le k\le n\right\}$ with
        $$ s_k= \sum_{i=1}^k c_i ,1\le k\le n$$

   The mean $ \overline{\vec{S}} $ and variance $ V(\vec{S}) $ of $ \vec{S} $ are
   $$ \overline{\vec{S}}=\frac{1}{n} \sum_{k=1}^n s_k ,V(\vec{S})=\frac{1}{n}\sum_{k=1}^n (s_k-\overline{\vec{S}})^2$$

   Since $ V(\vec{S}) $ is also a function of $\vec{C}$, it can be denoted as $f(\vec{C})$
   $$V(\vec{S})=\frac{1}{n}\sum_{k=1}^n \Bigl(\sum_{i=1}^k c_i-\frac{1}{n}\sum_{k=1}^n\sum_{i=1}^k c_i  \Bigr)^2=f(\vec{C}) $$

   The optimal solution to the $\mathrm{VPS_{max}}$ problem is defined as a sequence $ \vec{C}^\ast $, formed by sequencing the elements of $\vec{A}$, satisfying
    $$ \vec{C}^\ast=\arg\max_{\vec{C}}f(\vec{C})=\arg\max_{\vec{C}}V(\vec{S}) $$
   That is to say, an optimal solution is a sequence with the variance of its partial sums maximized.

   The $\mathrm{VPS_{max}}$ problem arises from the optimization of the measurement frequencies in a new kind of radio interferometry \cite{Maroti-05}\,, which is a promising ranging technique in wireless sensor networks. To the best of our knowledge, $\mathrm{VPS_{max}}$ has not been explored until now.

   It should be pointed out that the completion time variance (CTV) problem, which in essence aims at sequencing a set of positive numbers to minimize the variance of the sequence's partial sums, is a symmetric problem of $\mathrm{VPS_{max}}$. It has been studied extensively by the operations research community for decades \cite{Merten-72}-\cite{Ye-07}.

   In sharp contrast to CTV, which is found to be NP-complete in the literature \cite{Kubiak-93}, it is found in this paper that the optimal solution to $\mathrm{VPS_{max}}$ has a very nice structure.

\section{Preliminaries}
   \label{sec:Preliminaries}
   \setlength{\parskip}{0.5ex}   

    \begin{definition}
    \label{def:muij}
      The mean of the elements in $ \vec{S}=\left\{s_k,1\le k\le n \right\} $ ranging from the $i$th element to the $(j-1)$th element is defined as the\, $(i,j)$\begin{it}-partial mean\end{it} \,of $\vec{S}$, denoted as
       $$\mu_{ij}(\vec{S})=\frac{1}{j-i}\sum_{k=i}^{j-1}s_k,\,1\le i<j\le n+1$$
    \end{definition}

   Based on the monotonically increasing property of sequence \,$ \vec{S}=\left\{s_k,1\le k\le n \right\} $, we show that \,$\mu_{ij}(\vec{S})$
    \,has the following properties:
   \renewcommand{\labelenumi}{(\arabic{enumi})}   
   \begin{enumerate}
     \renewcommand{\baselinestretch}{1.4} \normalsize
      \item	 $\overline{\vec{S}}=\mu_{1n+1}(\vec{S})$;
      \item	 $\mu_{ij}(\vec{S})$ is a strictly monotonically increasing function of \,$i$ \,and\, $j$;
      \item for all $1\le i<j<k\le n+1,\mspace{4mu}
            \frac{j-i}{k-i}\,\mu_{ij}(\vec{S})+\frac{k-j}{k-i}\,\mu_{jk}(\vec{S})=\mu_{ik}(\vec{S}) $;
      \item $\mu_{ij}(\vec{S})<\mu_{kl}(\vec{S}) $ \,for\, $1\le i<k<l\le n+1$  \,and\, $1\le i<j<l\le n+1$ .
   \end{enumerate}

    To find the sequence that maximize the variance of its partial sums, we try to start from an arbitrary sequencing of $a_1,a_2,\dots,a_n $ and follow a path of favorable transforms that would eventually lead to the optimal one. We begin with the simplest transform from one sequence to another by interchanging positions of only two elements.

    \begin{definition}
    \label{def:ijchange}
       The\, transform\, from\, the\, sequence\, $ \vec{C}=\{c_1,\dots,c_{i-1},c_i,c_{i+1},\dots,c_{j-1},c_j,c_{j+1},\dots,c_n\} $ \, to\,  $\vec{C}^{\prime}=\{c_1,\dots,c_{i-1},c_j,c_{i+1},\dots,c_{j-1},c_i,c_{j+1},\dots,c_n\} $  by interchanging the $i$th element and the $j$th element is called an\,  $(i,j)$\textit{-interchange}\, of \,$ \vec{C} $.
    \end{definition}\par
   \vspace{-8pt}
   An $(i,j)$\textit{-interchange}\, from \,$\vec{C}$\, to\, $\vec{C}^{\prime}$ \,is a\, \textit{favorable\, transform} \, and \,$\vec{C}^{\prime}$ \,is called a\, \textit{better} sequence \,if \,$f(\vec{C}^{\prime})>f(\vec{C})$.\par
   Next, we give a criterion to determine whether a given\, $(i,j)$\textit{-interchange}\, is\, a\, \textit{favorable}\, transform.\par

   Let \,$ \vec{C}^{\prime} $\, denotes the transform of \,$ \vec{C}=\left\{c_k,1\le k\le n\right\} $\, via \,$(i,j)$\textit{-interchange}\, and \,$ \delta=c_j-c_i$. We get
   \begin{equation*}s_k^{\prime}=
    \begin {cases} s_k+\delta & i\le k<j\\s_k & others
    \end {cases}
   \end{equation*}
   and
      $$ \overline{\vec{S}^{\prime}}=\overline{\vec{S}}+\frac{(j-i)\delta}{n} $$
   Since the objective function can be simplified as
   \begin{align}
   f(\vec{C})=V(\vec{S})=\frac{1}{n}\sum_{k=1}^n s_k^2-\overline{\vec{S}}^2
   \end{align}
   It follows that
    \begin{align*}
    f(\vec{C}^{\prime})-f(\vec{C})&=\Biggl( \frac{1}{n}\sum_{k=1}^n {s_k^{\prime}}^2-\overline{\vec{S}^{\prime}}^2  \Biggr)-\Biggl( \frac{1}{n}\sum_{k=1}^n s_k^2-\overline{\vec{S}}^2  \Biggr)\\
     &= \left(\frac{1}{n}\sum_{k=i}^{j-1} \left( (s_k+\delta)^2-s_k^2\right) \right)-
       \left( \left(\overline{\vec{S}}+\frac{(j-i)\delta}{n} \right)^2-\overline{\vec{S}}^2 \right)\\
     &= \left(\frac{2\delta}{n}\sum_{k=i}^{j-1}s_k+ \frac{(j-i)}{n}\delta^2 \right) -
       \left( \frac{2(j-i)\delta}{n}\overline{\vec{S}} +\frac{(j-i)^2\delta^2}{n^2}  \right)\\
     &= \frac{2(j-i)\delta}{n} \left(\frac{1}{j-i}\sum_{k=i}^{j-1}s_k -\overline{\vec{S}} +
        \frac{\delta}{2}\left( 1 -\frac{(j-i)}{n}\right)  \right)
    \end{align*}

   \noindent In latter parts of this paper, we denote the difference \,$ f(\vec{C}^{\prime})-f(\vec{C}) $\, as \,$ \Delta {f}(i,j,\vec{C},\vec{C}^{\prime}) $\,. Then

   \begin{equation}
   \label{equ:deltf}
    \Delta{f}(i,j,\vec{C},\vec{C}^{\prime})= \frac{2(j-i)\delta}{n} \left( \mu_{ij}(\vec{S})-\overline{\vec{S}} +
        \frac{\delta}{2}\left( 1 -\frac{(j-i)}{n}\right)  \right)
   \end{equation}

   Let \,$\varphi(\delta)=D_1\delta^2+D_2\delta=\Delta f(i,j,\vec{C},\vec{C}^{\prime}) $, where \,$D_1=\frac{j-i}{n}\left(1-\frac{j-i}{n} \right)>0$ \,and \,$D_2=\frac{2(j-i)}{n}\left( \mu_{ij}(\vec{S})-\overline{\vec{S}}\right)$.
   Based on the property of quadratic equation, we have criterions for\, \textit{favorable} \,$(i,j)$\textit{-interchange}.
   \begin{itemize}
     \item Claim 1: if \,$D_2>0$\,  $\left( \textrm{i.e.}\mspace{6mu}\mu_{ij}(\vec{S})>\overline{\vec{S}}\,\right)$, \,then\, the \,$(i,j)$\textit{-interchange} \,is\, \textit{favorable}\, when\, $\delta>0$  \,or\,  $\delta<-D_2/D_1$.
         \vspace{0pt}
     \item Claim 2: if $D_2\le0$  $\left( \textrm{i.e.} \mspace{6mu}\mu_{ij}(\vec{S})\le\overline{\vec{S}}\,\right)$, \,then\, the $(i,j)$\textit{-interchange} \,is\, \textit{favorable} \,when\, $\delta<0$  \,or\,  $\delta>-D_2/D_1$.
   \end{itemize}

    We then have the following results.
    	
    \spnewtheorem{propo}{Proposition}{\bf}{\it}
    \begin{propo}
    \label{propo:c1}
     If $c_1\ne a_1$, \,then sequence $\vec{C}$  is not the optimal solution.
    \end{propo}

    \begin{proof}
      {Clearly, \,$\mu_{1j}(\vec{S})<\mu_{1n+1}(\vec{S})=\overline{\vec{S}} $, for any integer $j \in [2,n]$. Therefore, if $a_1=c_j$, we will obtain a\, \textit{better}\, sequence than\, $\vec{C}$\, via\, $(1,j)$\textit{-interchange}\, according to claim 2.} \qed  	
    \end{proof}

    \begin{propo}
    \label{propo:cn}
       If  $c_n=a_n$ with $n>3$, \,then sequence $\vec{C}$ is not the optimal solution.
    \end{propo}

    \begin{proof}
    Note that
    \begin{equation*}
         \mu_{n-1n}(\vec{S})=\sum_{k=1}^{n-1}c_k,\,\overline{\vec{S}}=\sum_{k=1}^{n}\frac{n-k+1}{n}c_k,\,\delta=c_n-c_{n-1}>0
    \end{equation*}
    and
      \begin{align*}
         \hspace{-0pt}\Delta{f}(n-1,n,\vec{C},\vec{C}^{\prime})&=\frac{2\delta}{n} \left(\mu_{n-1n}(\vec{S})-\overline{\vec{S}}+\frac{\delta}{2}(\frac{n-1}{n}) \right)\\
         &=\frac{2\delta}{n} \left(\sum_{k=1}^{n-1}\frac{k-1}{n}c_k-\frac{c_n}{n}+\frac{(n-1)(c_n-c_{n-1})}{2n} \right)\\
         &=\frac{2\delta}{n} \left(\sum_{k=1}^{n-2}\frac{k-1}{n}c_k +\frac{n-2}{n}c_{n-1} -\frac{c_n}{n}+\frac{(n-1)(c_n-c_{n-1})}{2n} \right)\\
         &=\frac{2(c_n-c_{n-1})}{n} \left(\sum_{k=1}^{n-2}\frac{k-1}{n}c_k +\frac{(n-3)(c_n+c_{n-1})}{2n} \right)
    \end{align*}
    The expression is always positive for $n>3$, i.e. $\Delta{f}(n-1,n,\vec{C},\vec{C}^{\prime})>0$. So, $(n-1,n)$\textit{-interchange} will obtain a\, \textit{better} \,sequence than $\vec{C}$. \qed
    \end{proof}

    \begin{propo}
    \label{propo:mean}
      If sequence $\vec{C}$ is the optimal solution with \,$c_k=a_n,1<k<n$, then
      \renewcommand{\labelenumi}{\it{(\arabic{enumi})}}
       \begin{enumerate}
        \renewcommand{\baselinestretch}{1.2}\normalsize
         \item	for \,any \,$i,j$ \,with \,$1\le i<j\le k$, \,$\mu_{ij}(\vec{S})<\overline{\vec{S}}$;
         \item	for \,any \,$i,j$ \,with \,$k\le i<j\le n$, \,$\mu_{ij}(\vec{S})>\overline{\vec{S}}$.
       \end{enumerate}
    \end{propo}

    \begin{proof}
    (1) Otherwise, there exist \,$i,j$\, such that \,$1\le i<j\le k$ \,and\, $\mu_{ij}(\vec{S})\ge \overline{\vec{S}}$, \,then
     $$ \mu_{ik}(\vec{S})\ge \mu_{ij}(\vec{S})\ge \overline{\vec{S}} $$
     A \,\textit{better}\, sequence will be created when exchanging the \,$i$th\, and the \,$k$th\, element of \,$\vec{C}$\, due to \,$\delta=c_k-c_i>0$. This conflicts with the optimality of \,$\vec{C}$.\par
     A similar proof can be applied to (2). \qed
    \end{proof}

    \begin{definition}
    \label{def:shape}
       A sequence is called a \,\begin{it}$\wedge$-shaped\end{it} \,sequence when the elements before the largest one are sorted in an ascending order, while the elements after the largest one are sorted in a descending order.
    \end{definition}

    \begin{propo}
       \label{propo:shape}
         The optimal sequence is $\wedge$-Shaped. In other words, if sequence $\vec{C}$ is the optimal solution with $c_k=a_n$, $1<k<n$, then
         \renewcommand{\labelenumi}{\it{(\arabic{enumi})}}
         \begin{enumerate}
           \renewcommand{\baselinestretch}{1.2}\normalsize
           \item	  for any \,$i,j$ \,with\, $1\le i<j\le k$, \,$c_i<c_j$;
           \item	  for any \,$i,j$ \,with\, $k\le i<j\le n$, \,$c_i>c_j$.
         \end{enumerate}
    \end{propo}

    \begin{proof}
    It is a direct conclusion of Proposition \ref{propo:mean}. \qed
    \end{proof}
    \noindent\textbf{Remark.} There exists similar property for the CTV problem. Eilon and Chowdhury \cite{Eilon-77} proved that the optimal solutions to the CTV minimization problem should be V-shaped, meaning that the elements before the smallest one are sorted in a decreasing order, while the elements after the smallest one are sorted in an ascending order.

\section{Main results}
   \label{sec:Main results}
    \begin{definition}
    \label{def:dual}
      For a sequence $\vec{C}=\{c_k,1\le k\le n\}$, do $(k,n+2-k)$\textit{-interchange} for all $2\le k\le u$, and we can obtain the \,\begin{it}dual sequence\end{it} \,$\vec{C}^d$\, of\, $\vec{C}$, where $u=\bigl\lceil{n/2}\bigr\rceil$,  \,$\lceil\,\rceil$ denotes the ceiling function.
    \end{definition}

    \begin{lemma}
       \label{lemma:dual}
           For sequence $\vec{C}=\{c_1,c_2,c_3,\dots,c_{n-1},c_n\}$ and its dual sequence $\vec{C}^d=\{c_1,c_n,c_{n-1},\dots,c_3,c_2\}$, \,the following identity holds.
           $$f(\vec{C})=f(\vec{C}^d)$$
    \end{lemma}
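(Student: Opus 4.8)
The plan is to compare the partial sums of $\vec{C}$ with those of $\vec{C}^d$ directly and show that the two multisets $\{s_1,\dots,s_n\}$ and $\{s_1^d,\dots,s_n^d\}$ coincide; since $f$ depends only on this multiset (via $f(\vec{C})=\frac{1}{n}\sum s_k^2-\overline{\vec{S}}^2$, and $\overline{\vec{S}}$ is the average of the $s_k$), the identity $f(\vec{C})=f(\vec{C}^d)$ follows immediately. The key observation is that the dual operation reverses the order of $c_2,\dots,c_n$ while keeping $c_1$ fixed, and the total sum $s_n=\sum_{k=1}^n c_k$ is unchanged. So the natural approach is to express each $s_k^d$ in terms of the original $c_i$'s and match it to some $s_m$.

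First I would write out the dual sequence explicitly: $c_1^d=c_1$ and $c_k^d=c_{n+2-k}$ for $2\le k\le n$. Then for $1\le k\le n$ the partial sum is
\begin{equation*}
  s_k^d=c_1+\sum_{i=2}^{k}c_{n+2-i}=c_1+\sum_{m=n+2-k}^{n}c_m .
\end{equation*}
Next I would relate this to a ``complementary'' partial sum of $\vec{C}$: since $\sum_{m=n+2-k}^{n}c_m = s_n - s_{n+1-k} + c_1 - c_1 = s_n - s_{n+1-k}$... — more carefully, $\sum_{m=n+2-k}^{n}c_m = s_n-s_{n+1-k}$ only if we are careful about the $c_1$ term. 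Observe that $s_n - s_{n+1-k}=\sum_{m=n+2-k}^{n}c_m$ exactly (a tail of the original sequence, which does not involve $c_1$). Hence
\begin{equation*}
  s_k^d = c_1 + s_n - s_{n+1-k}, \qquad 1\le k\le n .
\end{equation*}
This reindexing is the main computational content; the one thing to check carefully is the boundary cases $k=1$ (giving $s_1^d=c_1=s_1$, using $s_n-s_n=0$) and $k=n$ (giving $s_n^d=c_1+s_n-s_1=s_n$, since $s_1=c_1$), so that the map $k\mapsto n+1-k$ is a genuine bijection of index sets.

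The final step is to pass from the pointwise relation to equality of $f$. Summing the displayed relation over $k$ gives $\overline{\vec{S}^d}=c_1+s_n-\overline{\vec{S}}$ (using that $k\mapsto n+1-k$ permutes $\{1,\dots,n\}$), and therefore $s_k^d-\overline{\vec{S}^d}=\overline{\vec{S}}-s_{n+1-k}=-(s_{n+1-k}-\overline{\vec{S}})$. Squaring and summing over $k$, the sign disappears and the permutation $k\mapsto n+1-k$ leaves the sum invariant, so $\sum_k (s_k^d-\overline{\vec{S}^d})^2=\sum_k (s_k-\overline{\vec{S}})^2$, i.e. $f(\vec{C}^d)=f(\vec{C})$. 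I do not anticipate a serious obstacle here; the only place to be attentive is bookkeeping of the index shift and confirming that $c_1$ is genuinely fixed by the dual operation, so that the constant $c_1+s_n$ factors out cleanly and the reflection symmetry $s_k^d-\overline{\vec{S}^d}=-(s_{n+1-k}-\overline{\vec{S}})$ holds on the nose.
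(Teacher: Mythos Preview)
Your argument is correct and rests on the same identity the paper derives in its appendix, namely $s_k^d+s_{n+1-k}=c_1+s_n$ (the paper writes this as $s_{n+1-k}+s_k^d=\alpha$ with $\alpha=2c_1+c_2+\cdots+c_n$) together with $\overline{\vec{S}}+\overline{\vec{S}^d}=c_1+s_n$. Your finishing move---subtracting the two relations to get $s_k^d-\overline{\vec{S}^d}=-(s_{n+1-k}-\overline{\vec{S}})$ and then squaring---is slicker than the paper's version, which instead expands $\sum_k\bigl(s_k^2-(s_k^d)^2\bigr)$ via difference of squares and matches it term by term to $n\bigl(\overline{\vec{S}}^2-\overline{\vec{S}^d}^2\bigr)$; both reach the same conclusion, yours with less algebra.

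One caveat: your opening sentence claims the multisets $\{s_1,\dots,s_n\}$ and $\{s_1^d,\dots,s_n^d\}$ coincide, and that is false in general (e.g.\ $\vec{C}=(1,2,3)$ gives partial sums $\{1,3,6\}$ versus $\{1,4,6\}$ for $\vec{C}^d=(1,3,2)$). Fortunately you never use that claim---your actual argument shows the \emph{centered} values $s_k^d-\overline{\vec{S}^d}$ are a sign-flipped permutation of the $s_k-\overline{\vec{S}}$, which is the correct statement---so just delete or rephrase that first line.
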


   \begin{proof}
    See \cite{Merten-72}, we also provide another proof of Lemma \ref{lemma:dual} in the appendix.\qed
    \end{proof}

    \begin{lemma}\label{lemma:kinds}
           There exist at least two kinds of optimal sequences, one is in the form of \,$c_1=a_1,c_2=a_2$, \,and the other is in the form of \,$c_1=a_1,c_n=a_2$.
    \end{lemma}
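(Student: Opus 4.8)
The plan is to deduce this directly from the structural propositions already proved together with the duality of Lemma~\ref{lemma:dual}; no new inequality work should be needed. First I would invoke Proposition~\ref{propo:c1} to fix $c_1=a_1$ in every optimal sequence $\vec{C}$. Assuming $n>3$, Proposition~\ref{propo:cn} rules out $c_n=a_n$, so the largest element occupies an interior position, $c_k=a_n$ with $1<k<n$, and Proposition~\ref{propo:shape} then gives that $\vec{C}$ is $\wedge$-shaped: $c_1<c_2<\dots<c_k$ and $c_k>c_{k+1}>\dots>c_n$.

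The key step is to pin down where $a_2$ can sit in such a sequence. Since $\vec{C}$ is a permutation of $\vec{A}$ with $c_1=a_1$, we have $\{c_2,\dots,c_n\}=\{a_2,\dots,a_n\}$ as sets, so $a_2=\min\{c_2,\dots,c_n\}$. The ascending left branch gives $\min\{c_2,\dots,c_k\}=c_2$ and the descending right branch gives $\min\{c_{k+1},\dots,c_n\}=c_n$, hence $a_2=\min\{c_2,c_n\}$; as the numbers are distinct and $2\neq n$, this forces $c_2=a_2$ or $c_n=a_2$. In other words, in any optimal sequence the second-smallest number must occupy position $2$ or position $n$.

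It then remains only to exhibit an optimal sequence of each form. If $c_2=a_2$, then $\vec{C}$ is of the first form, and by Lemma~\ref{lemma:dual} its dual $\vec{C}^d=\{c_1,c_n,c_{n-1},\dots,c_3,c_2\}$ is also optimal, with $c_1^d=a_1$ and $c_n^d=c_2=a_2$, i.e.\ of the second form. If instead $c_n=a_2$, then $\vec{C}$ has the second form while $\vec{C}^d$, again optimal, has $c_1^d=a_1$ and $c_2^d=c_n=a_2$, the first form. Either way both kinds exist. The residual cases $n\le 3$ I would dispatch by inspection: there are at most two sequences with $c_1=a_1$, at least one is optimal by Proposition~\ref{propo:c1}, and these two sequences are mutual duals, so both are optimal and between them realize the two prescribed forms (for $n=2$ the two forms coincide in the single sequence $\{a_1,a_2\}$).

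I do not expect a real obstacle here; the argument is essentially bookkeeping once Propositions~\ref{propo:c1}, \ref{propo:cn}, \ref{propo:shape} and Lemma~\ref{lemma:dual} are in hand. The only spots that need a little care are the set-minimum computation in the degenerate $\wedge$-shapes where the left or right branch is a single element ($k=2$ or $k=n-1$), and the low-dimensional cases $n\le 3$, where Proposition~\ref{propo:cn} no longer applies and the two "forms'' may be represented by the same sequence.
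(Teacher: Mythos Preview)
Your proposal is correct and follows essentially the same route as the paper: use Propositions~\ref{propo:c1}, \ref{propo:cn}, and \ref{propo:shape} to force $c_1=a_1$ and a $\wedge$-shape, read off that $a_2$ must sit at position~$2$ or position~$n$, then apply the duality of Lemma~\ref{lemma:dual} to produce the other form. The paper's argument is the same (it phrases the location of $a_2$ as a two-case split on the position of $a_n$ rather than your set-minimum computation) but is terser and does not explicitly treat the $n\le 3$ cases you handle by inspection.
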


   \begin{proof}
   We consider the position of \,$a_n$\, in the optimal sequence first. Since $c_1=a_1$ and $c_n\ne a_n$, by Proposition \ref{propo:shape}, we have
     \vspace{-4pt}
     \begin{enumerate}
       \renewcommand{\baselinestretch}{1.2}\normalsize
       \item	case \,$c_2=a_n$: then \,$c_n=a_2$;
       \item	case \,$c_k=a_n,2<k<n$: then\, $c_n=a_2$ \,or \,$c_2=a_2$;
     \end{enumerate}
     \par\vspace{-4pt}
   This means either \,$c_n=a_2$ \,or \,$c_2=a_2$. When \,$c_n=a_2$, $c_2=a_2$\, is also the optimal position for \,$a_2$\, by Lemma \ref{lemma:dual} and vice versa. Therefore, both \,$c_n=a_2$ \,and\, $c_2=a_2$ \,are the optimal position for \,$a_2$. \qed
   \end{proof}
   \vspace{0pt}
   \noindent\textbf{Remark.} Note that the optimal sequence of the CTV problem is in the form of \,$c_1=a_n,c_2=a_{n-1}$ \,or\, $c_1=a_n$, $c_n=a_{n-1}$ \cite{Schrage-75} \cite{Hall-91}. Similarity appears once again. The major difference will be shown below.

   \begin{definition}
    \label{def:sumn+2}
     For a sequence $\vec{C}=\{c_k,1\le k\le n\}$, \,if the following two conditions do not hold,
         \begin{enumerate}
          \renewcommand{\baselinestretch}{1.2}\normalsize
          \item	  $c_k<c_{n+2-k}$ , for all $2\le k\le u$ ;
          \item	  $c_k>c_{n+2-k}$ , for all $2\le k\le u$ .
        \end{enumerate}\par
      \noindent then do $(k,n+2-k)$\textit{-interchange} for each \,$k$\, satisfying \,$c_k<c_{n+2-k},2\le k\le u$\, until all \,$c_k>c_{n+2-k},2\le k\le u$. This is called the \,\textit{sum-`$n$+2' transform} of \,$\vec{C}$.
   \end{definition}
    \begin{theorem}
    \label{theo:sumn+2}
     A sum-`n+2' transform always results in a \textit{better} sequence.
    \end{theorem}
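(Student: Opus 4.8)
The plan is to follow the transform through its action on the partial-sum vector $\vec{S}$, use the duality of Lemma~\ref{lemma:dual} to annihilate every ``first-order'' contribution, and be left with a single term whose sign is transparent. Set $u=\lceil n/2\rceil$. Since the $a_i$ are distinct, for each $k$ with $2\le k\le u$ exactly one of $c_k<c_{n+2-k}$ and $c_k>c_{n+2-k}$ holds; write $T_0=\{k:2\le k\le u,\ c_k<c_{n+2-k}\}$ and $T_1=\{k:2\le k\le u,\ c_k>c_{n+2-k}\}$, so $T_0\cup T_1=\{2,\dots,u\}$, and the hypothesis of Definition~\ref{def:sumn+2} (neither condition (1) nor (2) holds) says precisely that $T_0\neq\emptyset$ and $T_1\neq\emptyset$. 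The transform produces the sequence $\vec{C}_0$ obtained from $\vec{C}$ by performing the $(k,n+2-k)$-interchange for every $k\in T_0$. Put $\epsilon_k=c_{n+2-k}-c_k$ for $2\le k\le u$ (so $\epsilon_k>0$ on $T_0$ and $\epsilon_k<0$ on $T_1$), and let $\vec{g}_k$ be the length-$n$ vector whose $m$th entry is $1$ exactly for $k\le m\le n+1-k$ and $0$ otherwise. By the formula for $s_k^{\prime}$ under an $(i,j)$-interchange given above (with $i=k$, $j=n+2-k$), one $(k,n+2-k)$-interchange adds $\epsilon_k\vec{g}_k$ to $\vec{S}$; the position pairs $\{k,n+2-k\}$, $2\le k\le u$, are pairwise disjoint, so carrying out the interchanges over any $T\subseteq\{2,\dots,u\}$ replaces $\vec{S}$ by $\vec{S}+\sum_{k\in T}\epsilon_k\vec{g}_k$. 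Writing $\vec{D}_0=\sum_{k\in T_0}\epsilon_k\vec{g}_k$ and $\vec{D}_1=\sum_{k\in T_1}\epsilon_k\vec{g}_k$, the transformed sequence $\vec{C}_0$ has partial sums $\vec{S}+\vec{D}_0$, the dual $\vec{C}^{d}$ of $\vec{C}$ (which flips all of $\{2,\dots,u\}$) has partial sums $\vec{S}+\vec{D}_0+\vec{D}_1$, and the sequence $\vec{C}_1$ flipping only the pairs of $T_1$ has partial sums $\vec{S}+\vec{D}_1$ and equals the dual of $\vec{C}_0$.

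Next I would cancel the linear part. For length-$n$ vectors set $\mathrm{Cov}(\vec{X},\vec{Y})=\frac{1}{n}\sum_m x_m y_m-\overline{\vec{X}}\,\overline{\vec{Y}}$, so that $f=V=\mathrm{Cov}(\cdot,\cdot)$ on the diagonal and $V(\vec{X}+\vec{Y})=V(\vec{X})+2\,\mathrm{Cov}(\vec{X},\vec{Y})+V(\vec{Y})$. Expanding $f$ at the four partial-sum vectors $\vec{S}$, $\vec{S}+\vec{D}_0$, $\vec{S}+\vec{D}_1$, and $\vec{S}+\vec{D}_0+\vec{D}_1$, every term containing $\vec{S}$ cancels, and so do $V(\vec{D}_0)$ and $V(\vec{D}_1)$, leaving
\begin{equation*}
f(\vec{C}_0)+f(\vec{C}_1)-f(\vec{C}^{d})-f(\vec{C})=-2\,\mathrm{Cov}(\vec{D}_0,\vec{D}_1).
\end{equation*}
By Lemma~\ref{lemma:dual} one has $f(\vec{C}^{d})=f(\vec{C})$ and $f(\vec{C}_1)=f(\vec{C}_0)$, so the left-hand side is $2\bigl(f(\vec{C}_0)-f(\vec{C})\bigr)$, whence
\begin{equation*}
f(\vec{C}_0)-f(\vec{C})=-\mathrm{Cov}(\vec{D}_0,\vec{D}_1)=-\sum_{k\in T_0}\sum_{k'\in T_1}\epsilon_k\epsilon_{k'}\,\mathrm{Cov}(\vec{g}_k,\vec{g}_{k'}).
\end{equation*}

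Then I would settle the sign. For $k\in T_0$ and $k'\in T_1$ we have $\epsilon_k>0>\epsilon_{k'}$, hence $\epsilon_k\epsilon_{k'}<0$. The supports of $\vec{g}_k$ and $\vec{g}_{k'}$ are the blocks $\{k,\dots,n+1-k\}$ and $\{k',\dots,n+1-k'\}$, which are nested; assuming $k\le k'$ the pointwise product of the two vectors is $\vec{g}_{k'}$, so $\mathrm{Cov}(\vec{g}_k,\vec{g}_{k'})=\overline{\vec{g}_{k'}}\,(1-\overline{\vec{g}_k})$, and both factors lie in $(0,1)$ because $2\le k,k'\le u$ makes each block nonempty and a proper subset of $\{1,\dots,n\}$. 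Thus every summand above is strictly negative, and the double sum is nonempty since $T_0\neq\emptyset$ and $T_1\neq\emptyset$; therefore $f(\vec{C}_0)-f(\vec{C})=-\mathrm{Cov}(\vec{D}_0,\vec{D}_1)>0$, which is the theorem.

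I expect the main obstacle to be conceptual rather than computational. One cannot prove the statement by peeling off the interchanges one at a time, because an isolated $(k,n+2-k)$-interchange occurring in the transform need not be favourable: by \eqref{equ:deltf} its sign is governed by $\mu_{k,n+2-k}(\vec{S})-\overline{\vec{S}}$, which may well be negative, and the favourableness of the later interchanges depends on the order chosen. The device that unlocks the proof is to bring in the two dual sequences $\vec{C}^{d}$ and $\vec{C}_1$: their equal-$f$ identities are precisely what is needed to cancel the first-order term $2\,\mathrm{Cov}(\vec{S},\vec{D}_0)$ along with the self-terms $V(\vec{D}_0)$ and $V(\vec{D}_1)$, collapsing the whole increment to the single cross term $-\mathrm{Cov}(\vec{D}_0,\vec{D}_1)$. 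Everything else — the effect of one interchange on $\vec{S}$, and the positivity of the covariance of two nested indicator blocks — is routine.
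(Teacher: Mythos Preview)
Your argument is correct and reaches the same final expression as the paper --- the double sum over $T_0\times T_1$ that you write as $-\mathrm{Cov}(\vec{D}_0,\vec{D}_1)$ is termwise equal to the paper's residual sum~\eqref{equ:diffn+2} --- but the route is genuinely different. The paper never invokes Lemma~\ref{lemma:dual}; instead it expands $\Delta f(i,n+2-i,\cdot,\cdot)$ explicitly from~\eqref{equ:dfini}, telescopes through the chain $\vec{C}^{(0)},\vec{C}^{(1)},\dots,\vec{C}^{(p)}$, splits each increment into a part $Z$ (indexed by $k\in I$) and a part $R$ (indexed by $k\notin I$), and then cancels $\sum_m Z$ by a bare-hands rearrangement of a double sum (equation~\eqref{equ:zeron+2}). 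Your contribution is to recognise that this cancellation is precisely the content of the duality identity: writing $f(\vec{C}_0)+f(\vec{C}_1)-f(\vec{C}^d)-f(\vec{C})=-2\,\mathrm{Cov}(\vec{D}_0,\vec{D}_1)$ and then using $f(\vec{C}^d)=f(\vec{C})$ and $f(\vec{C}_1)=f((\vec{C}_0)^d)=f(\vec{C}_0)$ kills in one stroke all the terms the paper eliminates by explicit calculation. The payoff is a much shorter and more structural proof that also explains \emph{why} the $Z$-terms vanish; the paper's approach, in exchange, is self-contained (it does not rely on Lemma~\ref{lemma:dual}) and yields the explicit increment formula~\eqref{equ:diffn+2} along the way. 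Your closing remark that individual $(k,n+2-k)$-interchanges need not be favourable is also accurate and worth keeping: it pinpoints why a naive induction fails and why some global device (your four-sequence identity, or the paper's telescoping-and-cancellation) is required.
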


   \begin{proof}
   Note that
       $$ \mu_{ij}(\vec{S})=\sum_{k=1}^{i-1}c_k+\sum_{k=i}^{j-1}\frac{j-k}{j-i}c_k$$
   We begin by rewriting (\ref{equ:deltf}) as:
     \begin{align}
       \label{equ:dfini}
        \Delta{f}(i,j,\vec{C},\vec{C}^{\prime})&= \frac{2(j-i)(c_j-c_i)}{n} \Biggl( \sum_{k=1}^{i-1}c_k+\sum_{k=i}^{j-1}\frac{j-k}{j-i}c_k -\sum_{k=1}^{n}\frac{n+1-k}{n}c_k +\frac{c_j-c_i}{2}\left( 1 -\frac{j-i}{n}\right)  \Biggr) \notag \\
        &= \frac{2(j-i)(c_j-c_i)}{n^2}\Biggl( \sum_{k=1}^{i-1}(k-1)c_k+ \frac{1}{j-i}\sum_{k=i+1}^{j-1}\left((j-i-n)k-j+i+ni\right)c_k\notag \\
         & \hspace{85pt}-\sum_{k=j+1}^{n}(n+1-k)c_k +( i+j-n-2)(c_j+c_i)/{2} \Biggr )
     \end{align}
  If $i+j=n+2$, we obtain
    \begin{align}
       \Delta{f}(i,n+2-i,\vec{C},\vec{C}^{\prime})=&\frac{2(n+2-2i)(c_{n+2-i}-c_i)}{n^2} \Biggl( \sum_{k=2}^{i-1}(k-1)(c_k-c_{n+2-k})+\frac{i-1}{n+2-2i}\sum_{k=i+1}^{n+1-i}(n+2-2k)c_k\Biggr) \notag \\
                                      =& \frac{2(n+2-2i)(c_{n+2-i}-c_i)}{n^2} \Biggl( \sum_{k=2}^{i-1}(k-1)(c_k-c_{n+2-k})\notag \\
                                        &\hspace{115pt}+\frac{i-1}{n+2-2i}\sum_{k=i+1}^{u}(n+2-2k)(c_k-c_{n+2-k})\Biggr) \notag \\
                                      =&\frac{2(c_{n+2-i}-c_i)}{n^2} \Biggl( (n+2-2i)\sum_{k=2}^{i-1}(k-1)(c_k-c_{n+2-k})\notag \\
                                        &\hspace{65pt}+(i-1)\sum_{k=i+1}^{u}(n+2-2k)(c_k-c_{n+2-k})\Biggr)   
    \end{align}
   where $u=\bigl\lceil{n/2}\bigr\rceil$.\par
   Without loss of generality, Let $I=\left\{i_1,i_2,\dots,i_p\mid 2\le i_1<i_2\dots <i_p\le u \right\}$ denote the aggregate set of the indexes \,$k$\, in \,$\vec{C}=\{c_k,k=1\dots n\}$\, satisfying\, $c_{i_q}<c_{n+2-i_q}, q=1,2\dots p$\,.\par
   We define \,$\vec{C}^{(m)}=\{c_k^{(m)},k=1\dots n,m\le p \}$\, as the transform of sequence \,$\vec{C}$ \,via\, $(i_1,n+2-i_1),(i_2,n+2-i_2)\dots (i_m,n+2-i_m)$\textit{-interchange}\,, which is also denoted as \,$\vec{C}^{(0)}$\, for \,notational convenience. By definition, $\vec{C}^{(m)}$\, may also be obtained from \,$\vec{C}^{(m-1)}$\, by \,$(i_m,n+2-i_m)$ \textit{-interchange}. The element \,$c_k^{(m)}$\, is given by
      \begin{align}
        \label{equ:ck}
            c_k^{(m)}&=
                  \begin {cases} c_{n+2-k}^{(m-1)} &  k=i_m\\
                                 c_{n+2-k}^{(m-1)} &  k=n+2-i_m\\
                                     c_{k}^{(m-1)} &  others
                  \end {cases}\notag\\
                     &=
                 \begin {cases} c_{n+2-k} &  k=i_q,1\le q\le m\\
                                c_{n+2-k} &  k=n+2-i_q,1\le q\le m\\
                                    c_{k} &  others
                 \end {cases}
      \end{align}
   We then have
      \begin{align*}
      f\left(\vec{C}^{(m)}\right)-f\left(\vec{C}^{(m-1)}\right)&=\Delta f\left(i_m,n+2-i_m,\vec{C}^{(m-1)},\vec{C}^{(m)} \right)\\
                                                   &=\frac{2}{n^2}\left(c_{n+2-i_m}^{(m-1)}-c_{i_m}^{(m-1)}\right) \Biggl(
                                                     (n+2-2i_m)\sum_{k=2}^{i_m-1}(k-1)\left(c_k^{(m-1)}-c_{n+2-k}^{(m-1)} \right) \\
                                                   &\hspace{110pt} +(i_m-1)\sum_{k=i_m+1}^{u}(n+2-2k)\left(c_k^{(m-1)}-c_{n+2-k}^{(m-1)} \right)\Biggr)
      \end{align*}
   It follows from (\ref{equ:ck}) that
      \begin{align}
      \Delta f(i_m,n+2-i_m,&\vec{C}^{(m-1)},\vec{C}^{(m)})=\frac{2}{n^2}\left(c_{n+2-i_m}-c_{i_m}\right)
      \Biggl[ (n+2-2i_m) \notag\\
                         &\hspace{0pt} \biggl( \sum_{k\in I,k<i_m}(k-1)\left(c_k^{(m-1)}-c_{n+2-k}^{(m-1)}\right)
                                +\sum_{k\not\in I,2\le k<i_m}(k-1)\left(c_k^{(m-1)}-c_{n+2-k}^{(m-1)}\right) \biggr)\notag\\  &\hspace{60pt} +(i_m-1)\sum_{k=i_m+1}^{u}(n+2-2k)(c_k-c_{n+2-k})\Biggr] \notag\\
                  =&\frac{2}{n^2}\left(c_{n+2-i_m}-c_{i_m}\right) \Biggl[ (n+2-2i_m)\biggl( \sum_{k\in I,k<i_m}(k-1)(c_{n+2-k}-c_k)\notag\\
                    &+\sum_{k\not\in I,2\le k<i_m}(k-1)(c_k-c_{n+2-k}) \biggr)+(i_m-1)\biggl( \sum_{k\in I,k>i_m}(n+2-2k)(c_k-c_{n+2-k})\notag\\
                    &+\sum_{k\not\in I,i_m<k\le u}(n+2-2k)(c_k-c_{n+2-k}) \biggr) \Biggr]
      \end{align}
   Let
      \begin{align*}
                   Z\left(i_m,n+2-i_m,\vec{C}^{(m-1)},\vec{C}^{(m)}\right)&=\frac{2}{n^2}\left(c_{n+2-i_m}-c_{i_m}\right)\\
                   &\hspace{-60pt}\Biggl( (n+2-2i_m)\sum_{k\in I,k<i_m}(k-1)(c_{n+2-k}-c_k)+(i_m-1) \sum_{k\in I,k>i_m}(n+2-2k)(c_k-c_{n+2-k})\Biggr)
      \end{align*}
   and
      \begin{align*}
                   R\left(i_m,n+2-i_m,\vec{C}^{(m-1)},\vec{C}^{(m)}\right)&=\frac{2}{n^2}\left(c_{n+2-i_m}-c_{i_m}\right)\\
                   &\hspace{-60pt}\Biggl( (n+2-2i_m)\sum_{k\not\in I,2\le k<i_m}(k-1)(c_k-c_{n+2-k})+(i_m-1) \sum_{k\not\in I,i_m<k\le u}(n+2-2k)(c_k-c_{n+2-k}) \Biggr)
      \end{align*}
    Note that, using the definitions, the whole increment of \,$f\left(\vec{C}^{(0)}\right)$ \,after the \textit{sum-`$n+2$' transform} from \,$\vec{C}^{(0)}$\, to\, $\vec{C}^{(p)}$\, (\,via $(i_1,n+2-i_1),(i_2,n+2-i_2)\dots (i_p,n+2-i_p)$\textit{-interchange}\,) is given by
      \begin{align}
      f\left(\vec{C}^{(p)}\right)-f\left(\vec{C}^{(0)}\right)&=\sum_{m=1}^{p}\left(f\left(\vec{C}^{(m)}\right)-f\left(\vec{C}^{(m-1)}\right) \right)\notag\\
                                                 &=\sum_{m=1}^{p}\left[Z\left(i_m,n+2-i_m,\vec{C}^{(m-1)},\vec{C}^{(m)}\right) +R\left(i_m,n+2-i_m,\vec{C}^{(m-1)},\vec{C}^{(m)}\right)\right]
      \end{align}
    We then prove the following identity
      \begin{align}
          \label{equ:zeron+2}
            \sum_{m=1}^{p}Z\left(i_m,n+2-i_m,\vec{C}^{(m-1)},\vec{C}^{(m)}\right)
                       &=\frac{2}{n^2}\Biggl(\sum_{m=1}^{p}(n+2-2i_m)(c_{n+2-i_m}-c_{i_m})\sum_{k\in I,k<i_m}(k-1)(c_{n+2-k}-c_k)\notag\\
                        &\hspace{30pt}-\sum_{m=1}^{p} (i_m-1)(c_{n+2-i_m}-c_{i_m}) \sum_{k\in I,k>i_m}(n+2-2k)(c_{n+2-k}-c_k) \Biggr)\notag\\
                       &\hspace{0pt}=\frac{2}{n^2}\Biggl(\sum_{m=2}^{p}\sum_{l=1}^{m-1}
                           (n+2-2i_m)(c_{n+2-i_m}-c_{i_m})(i_l-1)(c_{n+2-{i_l}}-c_{i_l})\notag\\
                        &\hspace{30pt}-\sum_{m=1}^{p-1}\sum_{l=m+1}^{p} (i_m-1)(c_{n+2-i_m}-c_{i_m})(n+2-2{i_l})(c_{n+2-{i_l}}-c_{i_l})\Biggr) \notag\\
                       &\hspace{0pt}=\frac{2}{n^2}\Biggl(\sum_{l=1}^{p-1}\sum_{m=l+1}^{p}
                           (i_l-1)(c_{n+2-{i_l}}-c_{i_l})(n+2-2i_m)(c_{n+2-i_m}-c_{i_m}) \notag\\
                        &\hspace{30pt}-\sum_{m=1}^{p-1}\sum_{l=m+1}^{p} (i_m-1)(c_{n+2-i_m}-c_{i_m})(n+2-2{i_l})(c_{n+2-{i_l}}-c_{i_l})\Biggr) \notag\\
                       &\hspace{0pt}=0
     \end{align}
   Therefore, the residual part
   \begin{align}
      \label{equ:diffn+2}
      f\left(\vec{C}^{(p)}\right)-f\left(\vec{C}^{(0)}\right)&=\sum_{m=1}^{p}R\left(i_m,n+2-i_m,\vec{C}^{(m-1)},\vec{C}^{(m)}\right)\notag\\
      &=\frac{2}{n^2}\Biggl(\sum_{m=1}^{p}
      (n+2-2i_m)(c_{n+2-i_m}-c_{i_m})\sum_{k\not\in I,2\le k<i_m}(k-1)(c_k-c_{n+2-k})\notag\\
      &\hspace{30pt}+\sum_{m=1}^{p} (i_m-1)(c_{n+2-i_m}-c_{i_m}) \sum_{k\not\in I,i_m<k\le u}(n+2-2k)(c_k-c_{n+2-k}) \Biggr)
   \end{align}
   The set \,$K=\left\{k\not\in I,2\le k<i_m,1\le m\le p \right\}\cup\left\{k\not\in I,i_m<k\le u,1\le m\le p \right\} $\, is not empty by definition. Otherwise, we will get a contradiction with condition (1) of definition \ref{def:sumn+2}.\par
   Considering the inequality \,$(c_{n+2-i_m}-c_{i_m})(c_k-c_{n+2-k})>0$ \,when\, $k\not\in I$ \,and\, $(n+2-2k)>0$ \,when\, $k\le u$, we finally obtain
      $$f\left(\vec{C}^{(p)}\right)-f\left(\vec{C}^{(0)}\right)>0$$
   Thus the proof is complete. \qed
   \end{proof}
   \noindent\textbf{Example 1}. Let us regard a sequence $C^{(0)}=[\,1,6,2,3,4,8,7,5\,]$, then $C^{(p)}=C^{(2)}=[\, 1,6,7,8,4,3,2,5\,]$ is its \textit{sum-`$n+2$' transform}. \,Note that the transform consists of $(3,7)$\textit{-interchange} and $(4,6)$\textit{-interchange}. So we get the set \,$I=\left\{i_1,i_2,\dots,i_p\mid 2\le i_1<i_2\dots <i_p\le u,c_{i_q}<c_{n+2-i_q}, q=1,2\dots p \right\}=\{i_1=3,i_2=4\}$.\par
   From (\ref{equ:diffn+2}), we know
     \begin{align*}
        &f\left(C^{(p)}\right)-f\left(C^{(0)}\right)=\sum_{m=1}^{2}R\left(i_m,n+2-i_m,C^{(m-1)},C^{(m)}\right)\\
        &=\frac{2}{n^2}(c_7-c_3)\biggl[(n+2-2i_1)\sum_{k\not\in I,2\le k<i_1}(k-1)(c_k-c_{n+2-k})
                               + (i_1-1) \sum_{k\not\in I,i_1<k\le u}(n+2-2k)(c_k-c_{n+2-k})\biggr] \\
         &+\frac{2}{n^2}(c_6-c_4)\biggl[(n+2-2i_2)\sum_{k\not\in I,2\le k<i_2}(k-1)(c_k-c_{n+2-k})
                               + (i_2-1) \sum_{k\not\in I,i_2<k\le u}(n+2-2k)(c_k-c_{n+2-k})\biggr] \\
                  &\hspace{0pt}=\frac{2}{n^2}(c_7-c_3)\left[(n+2-2i_1)(2-1)(c_2-c_8)\right]
                               +\frac{2}{n^2}(c_6-c_4)\left[(n+2-2i_2)(2-1)(c_2-c_8)\right] \\
                  &\hspace{0pt}=0.9375
      \end{align*}
   On the other hand, we know $f\left(C^{(0)}\right)=131.5$\, and\, $f\left(C^{(2)}\right)=132.4375$. It follows that
   $$ f\left(C^{(2)}\right)-f\left(C^{(0)}\right)=0.9375 $$
   This verifies the identity (\ref{equ:diffn+2}) as well as Theorem \ref{theo:sumn+2}.\par
    \begin{definition}
      \label{def:sumn+1}
        For sequence \,$\vec{C}=\{c_k,1\le k\le n\}$, do \,$(k,n+1-k)$\textit{-interchange} for each \,$k$\, satisfying \,$c_k>c_{n+1-k}$, $2\le k\le u^{\prime}$ \,until all \,$c_k<c_{n+1-k},2\le k\le u^{\prime}$. where \,$u^{\prime}=\lfloor n/2\rfloor$, $\lfloor \, \rfloor$ \,denotes the floor function. This is called the\, \textit{sum-`$n+1$' transform} of \,$\vec{C}$.
    \end{definition}

    \begin{theorem}
      \label{theo:sumn+1}
        A sum-`n+1' transform always results in a \textit{better} sequence.
    \end{theorem}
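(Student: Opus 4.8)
The plan is to mirror the proof of Theorem~\ref{theo:sumn+2}, now pairing index $k$ with $n+1-k$ rather than with $n+2-k$, while keeping careful track of the new phenomenon that under this pairing \emph{both} endpoint positions $1$ and $n$ are fixed, whereas only position $1$ was fixed in the sum-`$n+2$' situation. First I would specialize (\ref{equ:dfini}) to $j=n+1-i$. Because $i+(n+1-i)-n-2=-1$, the quadratic boundary term no longer drops out, and after grouping the middle sum under $k\mapsto n+1-k$ one is led to an identity of the shape
\begin{align*}
\Delta f(i,n+1-i,\vec{C},\vec{C}')=\frac{2(c_{n+1-i}-c_i)}{n^2}\Bigl[&(n+1-2i)\sum_{k=2}^{i-1}(k-1)(c_k-c_{n+1-k})\\
&+\sum_{k=i+1}^{u'}\gamma_i(k)(c_k-c_{n+1-k})+(n+1-2i)\,E_i\Bigr],
\end{align*}
with $\gamma_i(k)=(i-1)(n+1-2k)+(i-k)$ and $E_i=\tfrac12(c_{n+1-i}-c_i)-\sum_{p=n+1-u'}^{n}c_p$ (with an extra $-\tfrac12 c_{(n+1)/2}$ when $n$ is odd). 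The two features absent from Theorem~\ref{theo:sumn+2} are the antisymmetric term $(i-k)$ inside $\gamma_i$ and the ``unpaired mass'' $E_i$, both coming from the pair $\{1,n\}$ being untouched.

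Second, I would run the telescoping $f(\vec{C}^{(p)})-f(\vec{C}^{(0)})=\sum_{m=1}^{p}\Delta f(i_m,n+1-i_m,\vec{C}^{(m-1)},\vec{C}^{(m)})$ with $I=\{i_1<\dots<i_p\}$ exactly as before, noting that positions $i_m$ and $n+1-i_m$ (and the midpoint when $n$ is odd) are untouched by the earlier interchanges, so $c_k^{(m-1)}-c_{n+1-k}^{(m-1)}=\pm(c_k-c_{n+1-k})$ with a sign flip precisely when $k\in I$ and $k<i_m$. Splitting $\gamma_i(k)=(i-1)(n+1-2k)+(i-k)$, the term $(i-1)(n+1-2k)$ together with the left sum has exactly the product form $(n+1-2i_m)(k-1)$, $(i_m-1)(n+1-2k)$ of the sum-`$n+2$' proof, so the contributions supported on $I\times I$ cancel by the very relabelling argument that established (\ref{equ:zeron+2}), and the contributions involving $k\notin I$ are nonnegative because $c_k<c_{n+1-k}$ there while $c_{i_m}>c_{n+1-i_m}$.

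Third --- and this is the heart of the matter --- I must show that the leftover of the antisymmetric term and the unpaired mass combine to a positive total. The $I\times I$ part of the $(i-k)$-term sums to $\tfrac{2}{n^2}\sum_{l<m}(i_m-i_l)\delta_l\delta_m$, where $\delta_q=c_{n+1-i_q}-c_{i_q}<0$; this is nonnegative. The $E_{i_m}$ contribution is $-\tfrac{2}{n^2}(n+1-2i_m)\delta_m\bigl(S_R^{(m-1)}+\tfrac12|\delta_m|+\cdots\bigr)$ with $S_R^{(m-1)}=\sum_{p=n+1-u'}^{n}c_p^{(m-1)}>0$, hence strictly positive for every $m$. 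The only negative contribution is the $k\notin I$ part of the $(i-k)$-term, $\tfrac{2}{n^2}\sum_m\delta_m\sum_{k\notin I,\ i_m<k\le u'}(i_m-k)(c_k-c_{n+1-k})$; I would dominate it termwise by the $E_{i_m}$ contribution, using $|c_k-c_{n+1-k}|<c_{n+1-k}$, the observation that each $n+1-k$ with $i_m<k\le u'$ lies in $\{n+1-u',\dots,n\}$ so that $\sum_{k\notin I,\ i_m<k\le u'}(k-i_m)c_{n+1-k}\le (u'-i_m)\,S_R^{(m-1)}$, and the elementary bound $u'-i_m<n+1-2i_m$. This yields $f(\vec{C}^{(p)})>f(\vec{C}^{(0)})$.

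The main obstacle is exactly this last comparison. Unlike Theorem~\ref{theo:sumn+2}, where the residual after cancelling the cross terms is visibly a sum of nonnegative quantities, here the residual is not, and one genuinely needs the extra mass $S_R$ --- which has no counterpart in the sum-`$n+2$' case --- to swallow the negative part produced by the antisymmetric term $(i-k)$. Setting up that bound correctly, and verifying it still holds in the degenerate small-$n$ or odd-$n$ cases where several index ranges collapse to the empty set, is the only genuinely new work; the rest is a transcription of the proof of Theorem~\ref{theo:sumn+2}.
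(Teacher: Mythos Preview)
Your plan is workable --- the identity with $\gamma_i(k)=(i-1)(n+1-2k)+(i-k)$ and the mass term $E_i=\tfrac12(c_{n+1-i}-c_i)-\sum_{p=n+1-u'}^{n}c_p$ is correct, and the inequality $u'-i_m<n+1-2i_m$ does let the $E$-term absorb the negative $(i-k)$ contribution. But you have manufactured the very difficulty you then labour to overcome.

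The paper does not pair the middle sum at all. Its specialization of (\ref{equ:dfini}) to $j=n+1-i$ is left in the form
\[
\Delta f=\frac{2(n{+}1{-}2i)(c_{n+1-i}-c_i)}{n^2}\Bigl[(*) + (**)\Bigr],
\]
where
\[
(*)=-\sum_{k=1}^{i-1}c_{n+1-k}-\frac{1}{n+1-2i}\sum_{k=i}^{n-i}(k-i)c_k-\frac{c_{n+1-i}+c_i}{2}
\]
and $(**)$ consists of exactly the two structured difference sums $\sum_{k=2}^{i-1}(k-1)(c_k-c_{n+1-k})$ and $\frac{i-1}{n+1-2i}\sum_{k=i+1}^{u'}(n+1-2k)(c_k-c_{n+1-k})$ already seen in Theorem~\ref{theo:sumn+2}. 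The block $(*)$ is a sum of nonpositive coefficients times positive $c_k$'s, hence strictly negative; since the outer factor $(n+1-2i_m)(c_{n+1-i_m}-c_{i_m})$ is negative for $i_m\in I'$, the contribution of $(*)$ to $\Delta f$ is strictly positive and is simply discarded to give a lower bound. What remains, $(**)$, has \emph{verbatim} the shape handled in Theorem~\ref{theo:sumn+2}: split by $k\in I'$ versus $k\notin I'$, the in-$I'$ cross terms cancel exactly as in (\ref{equ:zeron+2}), and the out-of-$I'$ residual is nonnegative. No sign competition ever arises.

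Your antisymmetric correction $(i-k)$ appears only because you insisted on pairing the sum $\sum(k-i)c_k$ into differences $c_k-c_{n+1-k}$; the paper leaves that sum unpaired, notes it is single-signed, and throws it away. Your route reaches the same conclusion, but the ``only genuinely new work'' you flag as the heart of the matter is in fact entirely avoidable.
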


   \begin{proof}
   Without loss of generality, Let \,$I^{\prime}=\left\{i_1,i_2,\dots,i_{p^{\prime}}\mid 2\le i_1<i_2\dots <i_{p^{\prime}}\le u^{\prime} \right\}$ \,denote the aggregate set of the indexes \,$k$\, in \,$\vec{C}=\{c_k,1\le k\le n\}$\, satisfying\, $c_{i_q}>c_{n+1-i_q},\,q=1,2\dots p^{\prime}$, $u^{\prime}=\lfloor n/2\rfloor$.\par
   If \,$i+j=n+1$, from (\ref{equ:dfini}), we obtain
    \begin{align}
     \Delta{f}(i,n+1-i,\vec{C},\vec{C}^{\prime})&= \frac{2}{n^2}(n+1-2i)(c_{n+1-i}-c_i)
                       \Biggl[-\sum_{k=n+2-i}^{n}(n+1-k)c_k\notag \\
                       &\hspace{10pt}-(c_{n+1-i}+c_i)/{2}+\sum_{k=2}^{i-1}(k-1)c_k
                        +\frac{1}{n+1-2i}\sum_{k=i+1}^{n-i}\left( (1-2i)k-n-1+2i+ni\right)c_k\Biggr]\notag \\
                       &=\frac{2}{n^2}(n+1-2i)(c_{n+1-i}-c_i) \Biggl[
                        -\sum_{k=1}^{i-1}c_{n+1-k}- \frac{1}{n+1-2i}\sum_{k=i}^{n-i}(k-i)c_k \notag \\
                       &\hspace{125pt}-(c_{n+1-i}+c_i)/{2}+\sum_{k=2}^{i-1}(k-1)(c_k-c_{n+1-k}) \notag \\ &\hspace{125pt}+\frac{i-1}{n+1-2i}\sum_{k=i+1}^{u^{\prime}}(n+1-2k)(c_k-c_{n+1-k}) \Biggr]
    \end{align}

  We define \,$\tilde{\vec{C}}^{(m)}=\{ \tilde{c}_k^{(m)},k=1\dots n,m\le p^{\prime} \}$\, as the transform of sequence \,$\vec{C}$\,\,via\, $(i_1,n+1-i_1),(i_2,n+1-i_2)\dots (i_m,n+1-i_m)$\textit{-interchange}\,, which is also denoted as \,$\tilde{\vec{C}}^{(0)}$ \,for \,notational\, convenience. By definition, \,$\tilde{\vec{C}}^{(m)}$\, may also be obtained from \,$\tilde{\vec{C}}^{(m-1)}$\, by \,$(i_m,n+1-i_m)$\textit{-interchange}. The element \,$\tilde{c}_k^{(m)}$\, is given by
     \begin{align}
      \label{equ:ck_n+1}
        \tilde{c}_k^{(m)}&=
            \begin {cases} \tilde{c}_{n+1-k}^{(m-1)} &  k=i_m\\
                           \tilde{c}_{n+1-k}^{(m-1)} &  k=n+1-i_m\\
                               \tilde{c}_{k}^{(m-1)} &  others
            \end {cases}\notag \\
         &=
            \begin {cases} c_{n+1-k} &  k=i_q,1\le q\le m\\
                           c_{n+1-k} &  k=n+1-i_q,1\le q\le m\\
                               c_{k} &  others
            \end {cases}
     \end{align}
   Then
     \begin{align}
      f\left(\tilde{\vec{C}}^{(m)}\right)-f\left(\tilde{\vec{C}}^{(m-1)}\right)&=\Delta f\left(i_m,n+1-i_m,\tilde{\vec{C}}^{(m-1)},\tilde{\vec{C}}^{(m)}\right)\notag\\
           &=\frac{2}{n^2}(n+1-2i_m)\left(\tilde{c}_{n+1-i_m}^{(m-1)}-\tilde{c}_{i_m}^{(m-1)}\right) \Biggl[
               -\sum_{k=1}^{i_m-1}\tilde{c}_{n+1-k}^{(m-1)}\notag \\
               &\hspace{10pt}-\frac{1}{n+1-2i_m}\sum_{k=i_m}^{n-i_m}(k-i_m)\tilde{c}_k^{(m-1)}
                +\sum_{k=2}^{i_m-1}(k-1)\left(\tilde{c}_k^{(m-1)}-\tilde{c}_{n+1-k}^{(m-1)}\right)\notag\\
               &\hspace{10pt}-\left(\tilde{c}_{n+1-i_m}^{(m-1)}+\tilde{c}_{i_m}^{(m-1)}\right)\Bigl/{2}+\frac{i_m-1}{n+1-2i_m}\sum_{k=i_m+1}^{u^{\prime}}(n+1-2k)\left(\tilde{c}_k^{(m-1)}-\tilde{c}_{n+1-k}^{(m-1)}\right)\Biggr]\notag\\
           &=\frac{2}{n^2}(n+1-2i_m)\left(c_{n+1-i_m}-c_{i_m}\right) \Biggl[
               -\sum_{k=1}^{i_m-1}\tilde{c}_{n+1-k}^{(m-1)}-\frac{1}{n+1-2i_m}\sum_{k=i_m}^{n-i_m}(k-i_m)\tilde{c}_k^{(m-1)}\notag\\
            &\hspace{10pt}-\left(\tilde{c}_{n+1-i_m}^{(m-1)}+\tilde{c}_{i_m}^{(m-1)}\right)\Bigl/{2}+\sum_{k=2}^{i_m-1}(k-1)\left(\tilde{c}_k^{(m-1)}-\tilde{c}_{n+1-k}^{(m-1)}\right)\notag\\
            &\hspace{10pt}+\frac{i_m-1}{n+1-2i_m}\sum_{k=i_m+1}^{u^{\prime}}(n+1-2k)\left(c_k-c_{n+1-k}\right)\Biggr]
     \end{align}
   Note that for \,$i\in I^{\prime},k\not\in I^{\prime}$, we have
        \begin{gather*}
          (c_{n+1-i}-c_i)<0,\,\\
          (c_{n+1-i}-c_i)(c_k-c_{n+1-k})>0
        \end{gather*}
   It follows that
        \begin{align}
         \Delta f(i_m,n+1-i_m,\tilde{\vec{C}}^{(m-1)},&\tilde{\vec{C}}^{(m)})>\frac{2}{n^2}\left(c_{n+1-i_m}-c_{i_m}\right)\Biggl[(n+1-2i_m)
                      \sum_{k=2}^{i_m-1}(k-1)\left(\tilde{c}_k^{(m-1)}-\tilde{c}_{n+1-k}^{(m-1)}\right)\notag\\
                       &\hspace{110pt}+{(i_m-1)}\sum_{k=i_m+1}^{u^{\prime}}(n+1-2k)\left(c_k-c_{n+1-k}\right)\Biggr]\notag\\
            =&\frac{2}{n^2}\left(c_{n+1-i_m}-c_{i_m}\right) \Biggl[
                      (n+1-2i_m)\biggl(
                      \sum_{k\in I^{\prime},k<i_m}(k-1)\left(\tilde{c}_k^{(m-1)}-\tilde{c}_{n+1-k}^{(m-1)}\right)\notag\\
                      &\hspace{80pt}+\sum_{k\not\in I^{\prime},2\le k<i_m}(k-1)\left(c_k-c_{n+1-k}\right)\biggr)+(i_m-1)\notag\\
                       &\biggl(\sum_{k\not\in I^{\prime},i_m<k\le u^{\prime}}(n+1-2k)\left(c_k-c_{n+1-k}\right)
                       +\sum_{k\in I^{\prime},k>i_m}(n+1-2k)\left(c_k-c_{n+1-k}\right)
                               \biggr)
                                                            \Biggr]\notag\\
             >&\frac{2}{n^2}\left(c_{n+1-i_m}-c_{i_m}\right) \Biggl[
                      (n+1-2i_m)\sum_{k\in I^{\prime},k<i_m}(k-1)\left(\tilde{c}_k^{(m-1)}-\tilde{c}_{n+1-k}^{(m-1)}\right)\notag\\
                      &\hspace{80pt}+(i_m-1)\sum_{k\in I^{\prime},k>i_m}(n+1-2k)\left(c_k-c_{n+1-k}\right)
                                                              \Biggr]\notag\\
             =&\frac{2}{n^2}\left(c_{n+1-i_m}-c_{i_m}\right) \Biggl[
                      (n+1-2i_m)\sum_{k\in I^{\prime},k<i_m}(k-1)\left(c_{n+1-k}-c_k\right)\notag\\
                      &\hspace{80pt}+(i_m-1)\sum_{k\in I^{\prime},k>i_m}(n+1-2k)\left(c_k-c_{n+1-k}\right)
                                                             \Biggr]
        \end{align}
     Following derivations similar to (\ref{equ:zeron+2}) yield
     \begin{align*}
        \sum_{m=1}^{p^{\prime}}\left(c_{n+1-i_m}-c_{i_m}\right) &\Biggl[
                      (n+1-2i_m)\sum_{k\in I^{\prime},k<i_m}(k-1)\left(c_{n+1-k}-c_k\right)\\
                      &+(i_m-1)\sum_{k\in I^{\prime},k>i_m}(n+1-2k)\left(c_k-c_{n+1-k}\right)
                                                          \Biggr]=0
     \end{align*}
    So we may conclude that
      $$ f\left(\tilde{\vec{C}}^{(p^{\prime})}\right)-f\left(\tilde{\vec{C}}^{(0)}\right)=\sum_{m=1}^{p^{\prime}}\Delta f(i_m,n+1-i_m,\tilde{\vec{C}}^{(m-1)},\tilde{\vec{C}}^{(m)})>0 $$
    Where \,$\tilde{\vec{C}}^{(p^{\prime})}$\, is the \textit{sum-`$n+1$' transform} of \,$\tilde{\vec{C}}^{(0)}$, \,the conclusion follows. \qed
    \end{proof}

    \begin{theorem}
      \label{theo:optimal}
         If the optimal sequence \,$\vec{C}^{\ast}$\, is in the form of \,$c_1^{\ast}=a_1$\, and\, $c_n^{\ast}=a_2$, then the following hold.\par
          \renewcommand{\labelenumi}{\it{(\arabic{enumi})}}
          \begin{enumerate}
          \renewcommand{\baselinestretch}{1.3}\normalsize
           \item	Case n is even \,or \,$u=u^{\prime}: \,c_{n+2-k}^{\ast}<c_k^{\ast}<c_{n+1-k}^{\ast}\hspace{6pt} \textrm{for}\hspace{6pt} 2\le k\le u$;
           \item	Case n is odd \,or \,$u=u^{\prime}+1: \,c_k^{\ast}<c_{n+1-k}^{\ast}  \hspace{6pt}\textrm{for}\hspace{6pt}  2\le k\le u^{\prime} \hspace{6pt}\textrm{and}\hspace{6pt} c_{n+2-k}^{\ast}<c_k^{\ast} $\hspace{6pt}{for}\hspace{6pt}$ 2\le k\le u$.
         \end{enumerate}
    \end{theorem}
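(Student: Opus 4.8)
The plan is to argue by contradiction via the two transform theorems. Since $\vec{C}^{\ast}$ maximises $f$ over \emph{all} sequencings of $\vec{A}$, no transform of $\vec{C}^{\ast}$ can be strictly \textit{better}, so neither Theorem~\ref{theo:sumn+1} nor Theorem~\ref{theo:sumn+2} can be applied to $\vec{C}^{\ast}$ non-trivially.

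First I would settle the ``$n+1$'' inequalities. Suppose some $k$ with $2\le k\le u^{\prime}$ had $c_k^{\ast}>c_{n+1-k}^{\ast}$. Then the index set $I^{\prime}$ of Definition~\ref{def:sumn+1} would be non-empty for $\vec{C}^{\ast}$, so the sum-`$n+1$' transform of $\vec{C}^{\ast}$ would be non-trivial and, by Theorem~\ref{theo:sumn+1}, strictly \textit{better} --- contradicting optimality. Since the entries of $\vec{C}^{\ast}$ are pairwise distinct, this forces $c_k^{\ast}<c_{n+1-k}^{\ast}$ for every $2\le k\le u^{\prime}$.

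Next I would settle the ``$n+2$'' inequalities, and this is the step that uses the hypothesis $c_n^{\ast}=a_2$. Because $c_1^{\ast}=a_1$ and $c_n^{\ast}=a_2$, the entry $c_2^{\ast}$ is one of $a_3,\dots,a_n$, whence $c_2^{\ast}\ge a_3>a_2=c_n^{\ast}$; as $n+2-2=n$, this says condition~(1) of Definition~\ref{def:sumn+2} already fails (at $k=2$) for $\vec{C}^{\ast}$. If condition~(2) of that definition also failed, then the sum-`$n+2$' transform of $\vec{C}^{\ast}$ would be defined and non-trivial, hence strictly \textit{better} by Theorem~\ref{theo:sumn+2}, again contradicting optimality. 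Therefore condition~(2) holds, i.e.\ $c_{n+2-k}^{\ast}<c_k^{\ast}$ for all $2\le k\le u$.

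Finally I would assemble the two families. If $n$ is even then $u=u^{\prime}$, so the two chains share the index range $2\le k\le u$ and concatenate into $c_{n+2-k}^{\ast}<c_k^{\ast}<c_{n+1-k}^{\ast}$, which is statement~(1). If $n$ is odd then $u=u^{\prime}+1$, the ``$n+1$'' family holds on $2\le k\le u^{\prime}$ and the ``$n+2$'' family on $2\le k\le u$, which is statement~(2). The only genuinely subtle point --- the ``hard part'' --- is recognising that condition~(1) of Definition~\ref{def:sumn+2} is \emph{not} ruled out by the transform theorems and must be excluded by hand; it is precisely the placement $c_n^{\ast}=a_2$, together with $a_2$ being the second-smallest element, that does this. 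Everything else, including the degenerate small cases $n\le 3$ where some index ranges are empty and the statement is vacuous, is routine.
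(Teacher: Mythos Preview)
Your proposal is correct and follows essentially the same argument as the paper: exclude condition~(1) of Definition~\ref{def:sumn+2} by hand using $c_n^{\ast}=a_2$, then invoke Theorem~\ref{theo:sumn+2} to force condition~(2), and invoke Theorem~\ref{theo:sumn+1} to force the $n+1$ inequalities. The only cosmetic difference is that the paper treats the $n+2$ family first and the $n+1$ family second, whereas you do the reverse; neither order matters.
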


    \begin{proof}
    We \,will\, first\, prove \,$c_k^{\ast}>c_{n+2-k}^{\ast}$\, by\, contradiction. \,It \,is\, obvious\, that \,$c_2^{\ast}>c_{n}^{\ast}$\,, then\, the\, condition\, (1) \,\,of definition \ref{def:sumn+2} doesn't hold. If the condition (2) of definition \ref{def:sumn+2} is also not satisfied, that is, there must exist a set $I=\bigl\{i_1,i_2,\dots,i_p\mid 2\le i_1<i_2\dots <i_p\le u$, $c_{i_q}^{\ast}<c_{n+2-i_q}^{\ast}, q=1,2\dots p \bigr\}$.
    Now we will get a \,\textit{better}\, sequence by applying \,\textit{sum-`$n+2$' transform}. A contradiction.\par
    This implies that the condition (2) must hold. In other words, we obtain \,$c_k^{\ast}>c_{n+2-k}^{\ast}$ \,for\, $2\le k\le u$.\par
    Similarly, we have \,$c_k^{\ast}<c_{n+1-k}^{\ast}$ \,for\, $2\le k\le u^{\prime}$ .\par
    This completes the proof. \qed
    \end{proof}

    \begin{corollary}
       \label{cly:optimal}
           The optimal sequences are
           \begin{equation*}\vec{C}^{\ast}=
              \begin {cases} \{a_1,a_3,a_5,a_7,\dots a_n,a_{n-1},a_{n-3},\dots a_6,a_4,a_2 \} & n\hspace{5pt} is\hspace{5pt} odd\\
                             \{a_1,a_3,a_5,a_7,\dots a_{n-1},a_n,a_{n-2},\dots a_6,a_4,a_2 \} & n\hspace{5pt} is\hspace{5pt} even
              \end {cases}
           \end{equation*}
        and
           \begin{equation*}\vec{C}^{\ast d}=
              \begin {cases} \{a_1,a_2,a_4,a_6,\dots a_{n-1},a_n,a_{n-2},\dots a_7,a_5,a_3 \} & n\hspace{5pt} is\hspace{5pt} odd\\
                             \{a_1,a_2,a_4,a_6,\dots a_n,a_{n-1},a_{n-3},\dots a_7,a_5,a_3 \} & n\hspace{5pt} is\hspace{5pt} even
              \end {cases}
           \end{equation*}
    \end{corollary}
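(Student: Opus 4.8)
The plan is to combine Lemma \ref{lemma:kinds}, Theorem \ref{theo:optimal} and Lemma \ref{lemma:dual}: first pin down the unique optimal sequence whose first entry is $a_1$ and whose last entry is $a_2$, then produce the second family as its dual, and finally argue that nothing else can be optimal. I would assume $n\ge 4$ throughout (for $n\le 3$ the claim is immediate from Proposition \ref{propo:c1}, Proposition \ref{propo:cn} and Lemma \ref{lemma:dual}, and the two displayed sequences collapse correctly). By Lemma \ref{lemma:kinds} there is an optimal $\vec{C}^{\ast}$ with $c_1^{\ast}=a_1$ and $c_n^{\ast}=a_2$; fix such a $\vec{C}^{\ast}$ and feed it into Theorem \ref{theo:optimal}.

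The core step is to splice the inequalities of Theorem \ref{theo:optimal} into a single chain running through all $n$ coordinates. In the case $u=u'$ (i.e.\ $n$ even) the theorem gives $c_{n+2-k}^{\ast}<c_k^{\ast}<c_{n+1-k}^{\ast}$ for $2\le k\le u$; since $n+1-k=n+2-(k+1)$, the upper bound is $c_{n+2-(k+1)}^{\ast}<c_{k+1}^{\ast}$, and together with $c_n^{\ast}<c_2^{\ast}$ (the $k=2$ lower bound) and the fact that $c_1^{\ast}=a_1$ is the global minimum one obtains
$$c_1^{\ast}<c_n^{\ast}<c_2^{\ast}<c_{n-1}^{\ast}<c_3^{\ast}<c_{n-2}^{\ast}<\cdots<c_{n/2}^{\ast}<c_{n/2+1}^{\ast}.$$
I would then check that the indices $1,n,2,n-1,3,n-2,\dots$ occurring here are exactly $1,2,\dots,n$ with no repetition, so this totally orders the $n$ coordinates; because the values are $a_1<a_2<\cdots<a_n$, the assignment is forced, giving $c_1^{\ast}=a_1$, $c_2^{\ast}=a_3$, $c_3^{\ast}=a_5,\dots$ on the left branch and $c_n^{\ast}=a_2$, $c_{n-1}^{\ast}=a_4,\dots$ on the right branch, which is exactly the stated form for $n$ even. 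The case $u=u'+1$ ($n$ odd) is handled the same way, except that at the apex the inequalities $c_{u'}^{\ast}<c_{u'+2}^{\ast}$ and $c_{u'+2}^{\ast}<c_{u'+1}^{\ast}=c_u^{\ast}$ point the two central coordinates in opposite directions, so the chain ends $\cdots<c_{u'}^{\ast}<c_{u'+2}^{\ast}<c_u^{\ast}$; again every index occurs once, and reading off the forced values yields the formula for $n$ odd, with the largest element sitting at position $u=\lceil n/2\rceil$.

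Having identified $\vec{C}^{\ast}$, I would obtain $\vec{C}^{\ast d}$ directly: by Definition \ref{def:dual} the dual is $\vec{C}^{\ast}$ with positions $2,\dots,n$ reversed, so reading the displayed formula for $\vec{C}^{\ast}$ backwards on those positions produces the claimed $\{a_1,a_2,a_4,a_6,\dots\}$ expression, and by Lemma \ref{lemma:dual} it has the same objective value, hence is optimal as well. To see that these two sequences exhaust all optimal ones, I would reuse the dichotomy inside the proof of Lemma \ref{lemma:kinds}: any optimal sequence has $c_1=a_1$ (Proposition \ref{propo:c1}), is $\wedge$-shaped (Proposition \ref{propo:shape}) and carries $a_n$ in an interior position (Proposition \ref{propo:cn}), which forces $c_2=a_2$ or $c_n=a_2$. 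If $c_n=a_2$ the chaining argument above identifies the sequence as $\vec{C}^{\ast}$; if $c_2=a_2$ then its dual has $c_n=a_2$, hence equals $\vec{C}^{\ast}$, so the sequence itself is $\vec{C}^{\ast d}$.

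The main obstacle is purely combinatorial bookkeeping: verifying that the spliced chain visits every index exactly once (this is precisely what makes the list of values forced) and treating the even/odd apex correctly — in particular that in the odd case $a_n$ lands at position $\lceil n/2\rceil$ rather than one step further, because the central inequality flips direction. Since Theorem \ref{theo:optimal} has already done all of the analytic work, beyond this careful index analysis I do not expect any genuine difficulty.
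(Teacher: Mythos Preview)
Your proposal is correct and follows essentially the same route as the paper's own proof: invoke Lemma~\ref{lemma:kinds} and Theorem~\ref{theo:optimal} to pin down the unique optimum with $c_1^{\ast}=a_1,\ c_n^{\ast}=a_2$, obtain $\vec{C}^{\ast d}$ via Lemma~\ref{lemma:dual}, and rule out further optima by duality. The only difference is that you spell out explicitly the chain $c_1^{\ast}<c_n^{\ast}<c_2^{\ast}<c_{n-1}^{\ast}<\cdots$ that the paper leaves implicit in the phrase ``we readily obtain the single optimal solution''; this is a welcome expansion rather than a departure.
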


    \begin{proof}
    By Theorem \ref{theo:optimal} and Lemma \ref{lemma:kinds}, we readily obtain the single optimal solution \,$\vec{C}^{\ast}$\, in the form of \,$c_1^{\ast}=a_1$\, and\, $c_n^{\ast}=a_2$,
         \begin{equation*}\vec{C}^{\ast}=
              \begin {cases} \{a_1,a_3,a_5,a_7,\dots a_n,a_{n-1},a_{n-3},\dots a_6,a_4,a_2 \} & n\hspace{5pt} is\hspace{5pt} odd\\
                             \{a_1,a_3,a_5,a_7,\dots a_{n-1},a_n,a_{n-2},\dots a_6,a_4,a_2 \} & n\hspace{5pt} is\hspace{5pt} even
              \end {cases}
         \end{equation*}
    Then we immediately obtain another optimal solution of the form \,$c_1^{\ast d}=a_1$\, and\, $c_2^{\ast d}=a_2$\, by Lemma \ref{lemma:dual}
         \begin{equation*}\vec{C}^{\ast d}=
              \begin {cases} \{a_1,a_2,a_4,a_6,\dots a_{n-1},a_n,a_{n-2},\dots a_7,a_5,a_3 \} & n\hspace{5pt} is\hspace{5pt} odd\\
                             \{a_1,a_2,a_4,a_6,\dots a_n,a_{n-1},a_{n-3},\dots a_7,a_5,a_3 \} & n\hspace{5pt} is\hspace{5pt} even
              \end {cases}
         \end{equation*}
    We claim that the optimal solution of the form \,$c_1^{\ast d}=a_1$\, and\, $c_2^{\ast d}=a_2$\, is also unique. Otherwise, we may get more than one optimal solution of the form \,$c_1^{\ast}=a_1$ \,and \,$c_n^{\ast}=a_2$\, according to Lemma \ref{lemma:dual}, contradiction.\par
    Hence we obtain all the two optimal sequences. \qed
    \end{proof}
    \noindent\textbf{Remark.} The properties of the $\mathrm{VPS_{max}}$ problem can also be applied to constrain the solution of the CTV problems and obtain a solution closer to the optimal one.\par
   \noindent\textbf{Example 2.} Let us look at sequence $C=\{\,9,8,6,4,2,1,3,5,7\,\}$ and $C^{\prime}=\{\,9,8,5,3,2,1,4,6,7\,\}$. It is observed that both sequences are V-shaped \cite{Eilon-77} and the three largest elements are placed in the optimal positions (\,i.e. the largest element has to be placed in position 1 while the second and third largest elements should be placed in position 2 and $n$\,) \cite{Hall-91}. Then we could not determine which one is better in the sense of smaller variance based on existing theory. However, using Theorem \ref{theo:sumn+2}, we immediately see that $C^{\prime}$ is better with smaller variance for the CTV problems.
    
\section*{Appendix}
    The proof of Lemma \ref{lemma:dual} is provided in this section. Let $\alpha =2c_1+c_2+c_3+\dots +c_n$ and $\vec{S}^d=\{s_k^d,1\le k\le n\}$ denote the partial sum sequence of \,$\vec{C}^d$, Note that
        \vspace{-0pt}
        \begin{gather*}
              s_k=\sum_{m=1}^{k}c_m,s_k^{d}=c_1+\sum_{m=2}^{k}c_{n+2-m},2\le k\le n\\
              s_1=s_1^{d}=c_1,s_n=s_n^{d}\\
              s_{n+1-k}+s_k^{d}=s_k+s_{n+1-k}^{d}=\alpha\\
              \overline{\vec{S}}=c_1+\sum_{k=2}^{n}\frac{n+1-k}{n}c_k,\overline{\vec{S}^d}=c_1+\sum_{k=2}^{n}\frac{n+1-k}{n}c_{n+2-k}\\
              \overline{\vec{S}}+\overline{\vec{S}^d}=2c_1+\sum_{k=2}^{n}\frac{n+1-k}{n}c_k+\sum_{k=2}^{n}\frac{k-1}{n}c_{k}=\alpha
        \end{gather*}
    \vspace{-0pt}
    On the other hand
    \vspace{-0pt}
        \begin{gather*}
              s_{n+1-k}-s_{n+1-k}^{d}=s_k-s_k^{d}=\sum_{m=2}^{k}c_{m}-\sum_{m=n+2-k}^{n}c_{m}\\
              n(\overline{\vec{S}}-\overline{\vec{S}^d})=\sum_{k=2}^{n}(n+1-k)c_k-\sum_{k=2}^{n}(k-1)c_{k}=\sum_{k=2}^{n}(n+2-2k)c_k\\
              n(\overline{\vec{S}}^2-\overline{\vec{S}^d}^2)=\alpha \sum_{k=2}^{n}(n+2-2k)c_k
        \end{gather*}
    Therefore
       \begin{align*}
        \sum_{k=2}^{n-1}\left(s_k^2-(s_k^{d})^2\right)=&\sum_{k=2}^{n-1}\left[\left(s_k^2-(s_k^{d})^2\right)+\left(s_{n+1-k}^2-(s_{n+1-k}^{d})^2\right)\right]/2\\
                                           =&2\alpha \sum_{k=2}^{n}(s_k-{s_k^{d}})/2\\
                                           =&\alpha \sum_{k=2}^{n}\left(\sum_{m=2}^{k}c_{m}-\sum_{m=n+2-k}^{n}c_{m} \right)\\
                                           =&\alpha \sum_{k=2}^{n}(n+1-k)c_k-\alpha \sum_{k=2}^{n}(k-1)c_k\\
                                           =&n(\overline{\vec{S}}^2-\overline{\vec{S}^d}^2)
       \end{align*}
    We obtain
       \begin{align*}
       f(\vec{C})-f(\vec{C}^d)&=\left(\frac{1}{n}\sum_{k=1}^n s_k^2-\overline{\vec{S}}^2 \right)-\left(\frac{1}{n}\sum_{k=1}^n (s_k^{d})^2-\overline{\vec{S}^d}^2 \right)\\
       &=\frac{1}{n} \left( \sum_{k=2}^{n-1}\left(s_k^2 -(s_k^{d})^2\right) -n(\overline{\vec{S}}^2-\overline{\vec{S}^d}^2)  \right)=0
       \end{align*}
\bibliographystyle{spbasic}

\begin{thebibliography}{weili}
   \bibitem {Merten-72} A.G.Merten and M.E.Muller, \textit{Variance minimization in single machine sequencing problems}, Manag. Sci, 18(9), 518-528, 1972.
   \bibitem {Schrage-75}L. Schrage, \textit{Minimizing the time-in-system variance for a finite jobset}, Manag. Sci. 21(5), 540-543, 1975.
   \bibitem {Hall-91}N.G.Hall and W.Kubiak, \textit{Proof of a conjecture of Schrage about the completion time variance problem}, Oper. Res. Lett. 10(8), 467-472, 1991.
   \bibitem {Eilon-77} S. Eilon and I. G. Chowdhury, \textit{Minimizing waiting time variance in the single machine problem}, Manag. Sci. 23(6), 567-575, 1977.
   \bibitem {Kubiak-93} W. Kubiak, \textit{Completion time variance minimization on a single machine is difficult}, Oper. Res. Lett. 14(1), 49-59, 1993.
   \bibitem {Cheng-96} T.C.E. Cheng and M.Y. Kovalyov, \textit{Batch scheduling and common due-date assignment on a single machine}, Discrete Applied Mathematic. 70, 231-245, 1996.
   \bibitem {Viswanathkumar-03} G. Viswanathkumar and G .Srinivasan, \textit{A branch and bound algorithm to minimize completion time variance on a single processor}, Comput. Oper. Res. 30(8), 1135-1150, 2003.
   \bibitem {Ye-07} N. Ye, X. Li, T. Farley, and X. Xu, \textit{Job scheduling methods for reducing waiting time variance}, Comput. Oper. Res. 34(10), 3069-3083, 2007.
   \bibitem {Maroti-05}  M. Maroti, B. Kusy, G. Balogh, P. Volgyesi, A. Nadas, K. Molnar, S. Dora, and A. Ledeczi, \textit{Radio interferometric geolocation}, In Proc. of 3rd ACM International Conference on Embedded Networked Sensor Systems (SenSys), 2005.
\end{thebibliography}

\nocite{*}

\end{document}